\def\serieslogo@{} 
\def\@setcopyright{} 
\title[Representation Type of EI-categories]{Representation Type of EI-categories}
\author{Karsten Dietrich}
\address{Karsten Dietrich\\ Fakult\"at f\"ur Mathematik\\
Universit\"at Bielefeld\\ D-33501 Bielefeld\\ Germany.}
\email{karsten.dietrich@gmx.net}
\subjclass[2000]{16E10, 16G10}
\newtheorem{lem}{Lemma}[section]
\newtheorem{prop}[lem]{Proposition}
\newtheorem{cor}[lem]{Corollary}
\newtheorem{thm}[lem]{Theorem}
\theoremstyle{definition}
\newtheorem{exm}[lem]{Example}
\newtheorem{Rem}[lem]{Remark}
\newtheorem{defn}[lem]{Definition}
\newtheorem*{ackn}{Acknowledegement}
\renewcommand{\mod}{\operatorname{mod}\nolimits}
\newcommand{\rep}{\operatorname{rep}\nolimits}
\newcommand{\ch}{\operatorname{char}\nolimits}
\newcommand{\Fun}{\operatorname{Fun}\nolimits}
\newcommand{\Aut}{\operatorname{Aut}\nolimits}
\newcommand{\Ob}{\operatorname{Ob}\nolimits}
\newcommand{\Mor}{\operatorname{Mor}\nolimits}
\newcommand{\id}{\operatorname{id}\nolimits}
\newcommand{\Mod}{\operatorname{Mod}\nolimits}
\renewcommand{\mod}{\operatorname{mod}\nolimits}
\newcommand{\End}{\operatorname{End}\nolimits}
\renewcommand{\dim}{\operatorname{dim}\nolimits}
\newcommand{\Rep}{\operatorname{Rep}\nolimits}
\def\C{{\mathcal C}}
\def\D{{\mathcal D}}
\def\G{{\mathcal G}}
\begin{document}

\title{Representation Type of EI-Categories}

\begin{abstract}
	EI-categories are a simultaneous generalisation of finite groups and finite quivers without oriented cycles. It is therefore a natural question to ask for a characterisation of finite representation type. For special classes of EI-categories a complete characterisation is obtained using quiver techniques. For EI-categories with two objects we present a necessary criterion for finite representation type. The complexity of this classification problem is illustrated by some examples.
	\end{abstract}

\maketitle
\setcounter{tocdepth}{1}
\tableofcontents

\section{Introduction}
An EI-category $\C$ is a category in which every endomorphism is an isomorphism. For a fixed base ring $k$ the associated category algebra to a finite EI-category $\C$ is denoted by $k\C$. It has as basis the set of morphisms in $\C$ with multiplication induced by composition of morphisms. Hence, the category algebra of a finite EI-category  is a simultaneous generalisation of several important constructions in representation theory, such as the group algebra of a finite group, the path algebra of a finite quiver without oriented cycles or the incidence algebra of a finite poset.

It is therefore a natural question to ask for a classification of the representation-finite EI-categories or more precisely a classification of the finite EI-categories which lead to to a representation-finite category algebra over a field $k$. 

As this is the first serious investigation of representation type of EI-categories, we will start with a couple of basic observations. The first one will lead us to the construction of the endotrivialisation of a given EI-category which is an EI-category with only trivial endomorhpisms attached to the  given EI-category. The endotrivialisations of representation-finite EI-categories will turn out to be finite posets of finite representation type.

The analysis of examples will illustrate that, as one should expect, the representation type of EI-categories (also those not arising as group algebras) depends on the characteristic of the underlying field. A lack of methods will also become visible in the examples since neither quiver nor poset or group techniques apply for a distinction of finite and infinite representation type.

Consequently, we will restrict ourselves to a special class of EI-categories namely the EI-categories with two non-isomorphic objects. In this setting a necessary criterion for finite representation type is obtained. Roughly speaking the theorem states that for any representation-finite EI-category $\C$ there are no two non-isomorphic objects $x$ and $y$ in $\C$ such that the product $\Aut(x)\times\Aut(y)$ acts freely on $\C(x,y)$.

Finally, we will give a full classification of all representation-finite EI-categories that admit only two simple representations using quiver techniques. In particular we will calculate the Ext-quiver for the associated category algebras and apply covering theory in the sense of Bongartz and Gabriel.

\begin{ackn}
This paper is based on parts of my PhD-thesis under the supervision of Henning Krause. I am grateful to him for coming up with an interesting topic and also for his advice. 

Further I would like to thank Fei Xu whose work on representation theory of EI- categories together with personal discussions with him helped me to develop a feeling for EI-categories and their representations as well as some of the crucial ideas. 

Finally, I want to express my thanks to Peter Webb who, after reading the relevant parts of my thesis, provided several comments on my work that enlarged my understanding of the whole mathematical framework and led to the correction of one of my theorems.
\end{ackn}

\section{Basic facts and endotrivialisations}
EI-categories and their representations first appeared in work of L\"uck and tom Dieck in the 1980s in the context of algebraic K-theory. The whole theory can be embedded in the representation theory of small categories. We will restrict ourselves to finite EI-categories from the very beginning, hence not all definitions and facts are presented in full generality.

Throughout fix a field $k$.

\subsection{Basic facts about EI-categories}
\begin{defn}An \emph{EI-category} is a category $\C$ in which every endomorphism is an isomorphism. If $\mathcal{C}$ is a finite EI-category, the associated $k$-algebra  \[ k\mathcal{C} = \Set{ \sum_{f \in \Mor\mathcal{C}} \lambda_f f | \lambda_f \in k }\]
is a finitely generated unital $k$-algebra, sometimes called the associated EI-algebra. The unit element is $ \sum_{x \in \Ob\mathcal{C}} 1_x $ and obviously the elements $\Set{ 1_{x} | x \in \Ob\mathcal{C} }$ form a set of pairwise mutually orthogonal idempotents in $k\mathcal{C}$. These idempotents are in general not primitive.
\end{defn}
If we speak of EI-categories we will always refer to finite EI-categories. A representation $V$ of an EI-category $\C$ over $k$ is a covariant functor $V: \C \to \mod k$. Morphisms of representations are morphisms of functors. Hence, the representations of EI-categories form an abelian category.

The following elementary observation relates the concepts of representations of $\C$ and modules over $k\C$.
\begin{prop}[Mitchell,\cite{Mitchell}]
Let $\C$ be a category with finitely many objects and $k$ a field. Then the categories $\Rep_k\C$ and $\Mod k\C$ are equivalent. This equivalence restricts to an equivalence $\mod k\C \to \rep_{k}\C = \Fun(\C,\mod k)$ on the finite-dimensional level.
\end{prop}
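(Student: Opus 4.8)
The plan is to construct the equivalence explicitly and check it is well-defined, functorial, and an equivalence; this is essentially Mitchell's classical observation, so the work is bookkeeping rather than a genuine obstacle.

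First I would describe the functor $\Phi \colon \Rep_k\C \to \Mod k\C$. Given a representation $V \colon \C \to \mod k$, set $\Phi(V) = \bigoplus_{x \in \Ob\C} V(x)$, which is a $k$-vector space. To make it a $k\C$-module it suffices to let a basis morphism $f \in \C(x,y)$ act by sending the summand $V(x)$ to $V(y)$ via $V(f)$ and killing all other summands $V(z)$ for $z \neq x$; one then extends $k$-linearly and checks that this respects composition of morphisms, using functoriality $V(g)\comp V(f) = V(g\comp f)$, and that $\sum_{x}1_x$ acts as the identity, which is exactly the statement that each $V(1_x)$ is the identity on $V(x)$. A morphism of functors $\eta \colon V \to W$ is a family $(\eta_x \colon V(x) \to W(x))_{x}$ compatible with all $V(f)$; the map $\Phi(\eta) = \bigoplus_x \eta_x$ is then $k\C$-linear precisely because of this compatibility. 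So $\Phi$ is a well-defined additive functor.

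Next I would build a quasi-inverse $\Psi \colon \Mod k\C \to \Rep_k\C$. Given a $k\C$-module $M$, the idempotents $1_x$ are orthogonal with $\sum_x 1_x = 1$, so $M = \bigoplus_{x} 1_x M$ as a $k$-vector space; define $\Psi(M)(x) = 1_x M$. For $f \in \C(x,y)$, note $f = 1_y f 1_x$, so left multiplication by $f$ carries $1_x M$ into $1_y M$, and this defines $\Psi(M)(f) \colon \Psi(M)(x) \to \Psi(M)(y)$. Functoriality of $\Psi(M)$ follows from associativity of the module action, and $\Psi(M)(1_x) = \id$ because $1_x$ acts as the identity on $1_x M$. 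On morphisms, a $k\C$-linear map $\psi \colon M \to N$ restricts to maps $1_x M \to 1_x N$ because it commutes with multiplication by $1_x$, giving a morphism of functors. Then I would exhibit the natural isomorphisms $\Psi\Phi \cong \Id$ and $\Phi\Psi \cong \Id$: for $V$ one has $1_x \Phi(V) = V(x)$ canonically, and the induced maps match $V(f)$; for $M$ the canonical map $\bigoplus_x 1_x M \to M$ is the identity, and it intertwines the two actions.

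Finally, for the restriction to the finite-dimensional level I would observe that $\dim_k \Phi(V) = \sum_x \dim_k V(x)$, which is finite exactly when each $V(x)$ is finite-dimensional, i.e. when $V$ lands in $\mod k$; conversely a finite-dimensional $k\C$-module $M$ has each $1_x M$ finite-dimensional. Hence $\Phi$ and $\Psi$ restrict to mutually quasi-inverse equivalences between $\rep_k\C = \Fun(\C,\mod k)$ and $\mod k\C$. The only point requiring a little care — and the closest thing to an obstacle — is the verification that the action defined on $\Phi(V)$ is genuinely associative and unital, i.e.\ that one really has a module and not just a vector space with a bilinear pairing; this reduces cleanly to functoriality of $V$, so no serious difficulty arises.
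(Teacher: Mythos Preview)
The paper does not supply its own proof of this proposition; it is stated with attribution to Mitchell and left without argument. Your proposal is the standard proof and is correct: construct $\Phi$ by $\bigoplus_x V(x)$ with the evident action, construct the quasi-inverse $\Psi$ using the idempotent decomposition $M = \bigoplus_x 1_x M$, and observe that finite-dimensionality is preserved in both directions because $\Ob\C$ is finite. One small slip: in describing $\Phi$ on the large category $\Rep_k\C$ you wrote ``$V \colon \C \to \mod k$'', but for the unrestricted statement $V$ should take values in all $k$-vector spaces, not just finite-dimensional ones; the construction is of course unchanged.
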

\begin{exm}
The motivating examples for our work on EI-categories are the following ones.	
\begin{itemize}
\item[(1)] Let $G$ be a finite group and let $\G$ be the category with one object $x$ and $\text{End}(x)= G$. Then $\G$ is an EI-category and $k\G = kG$.
\item[(2)] Let $Q$ be a finite quiver without oriented cycles and $\underline{Q}$ its path category. Then $\underline{Q}$ is an EI-category with $k\underline{Q} = kQ$.
\end{itemize}
Another important example of EI-categories is given by partially ordered sets. The associated category has only trivial endomorphisms and is therefore EI. These particular EI-categories will play a prominent role later on.

The second branch of mathematics where EI-categories arise is algebraic topology. Fusion categories and transporter categories, recently studied by Broto, Levi and Oliver \cite{BLO}, and orbit categories, playing an important role in the theory of finite G-spaces, are all EI-categories.
\end{exm}
The set of isomorphism classes of objects of an EI-category $\C$ carries the natural structure of a finite partially ordered set induced by the relation $$ x \leq y \Leftrightarrow \C(x,y) \neq \emptyset. $$
This poset structure will be used in the characterisation of representation-finite endotrivial EI-categories.
\subsection{The endotrivialisations of EI-categories of finite type}
To an arbitrary EI-category $\C$ we will associate another category $\widehat{\C}$ with only identity endomorphisms, which reflects the global structure of $\C$. This category $\widehat{\C}$ will play an important role in the analysis of the representation type of EI-categories.
\begin{defn}
Let $f$ and $g$ be two morphisms in a finite, skeletal EI-category $\C$. Then we define a relation $\sim$ on the set of morphisms of $\C$ as follows.
$$f \sim g :\Leftrightarrow f = f''h_1 f'  \text{ and }  g = f'' h_2 f'   \text{ for some }  f'', f' \in \Mor\mathcal{C}  \text{ and endomorphisms }  h_1, h_2 $$
This is clearly a reflexive and symmetric relation. We will consider the transitive hull of this relation and denote it again by $\sim$. This relation is also compatible with the composition of morphisms in $\mathcal{C}$. Therefore, we get a new category $ \widehat{\mathcal{C}}:= \mathcal{C}/\sim $ which is by construction an endotrivial category (in particular EI).
\end{defn}
 Roughly speaking, $\widehat{\mathcal{C}}$ is constructed from $\mathcal{C}$ by making all endomorphisms trivial and identifying all morphisms $x \to y$ in the same $(\Aut(x)\times \Aut(y))$-orbit. By construction it has the following important universal property. Suppose that $\C$ is an EI-category and $F:\C \to \D$ any functor to an endotrivial category $\D$. Then this functor $F$ factors via a unique functor through the quotient functor $G: \C \to \widehat{\C}$, i.e.\ the following diagram is commutative. $$ \begin{xy}\xymatrix{ \C \ar[r]^G \ar[d]_F & \widehat{\C} \ar@{-->}[dl]^{\exists!} \\ \D & }\end{xy}$$
 \begin{exm}
\begin{enumerate}
\item[(1)] If $G$ is a finite group and $\mathcal{C} = \G$ the associated EI-category, then $\widehat{\G}$ consists of one object $x$ and the only morphism is the identity $1_x$.
\item[(2)] If $Q$ is a finite quiver without oriented cycles and $\mathcal{C} = \underline{Q}$ the path category, then $\widehat{\mathcal{C}} = \mathcal{C}$.
\item[(3)] If $\mathcal{C}$ is the EI-category associated to a finite poset $(X,\leq)$ we get $\widehat{\mathcal{C}} = \mathcal{C}$.
\end{enumerate}
\end{exm}
\begin{Rem}\begin{enumerate}
\item[(1)]
An EI-category $\C$ is not uniquely determined by the category $\widehat{\C}$ together with its automorphism groups. To recover the entire structure of $\C$ one needs to know the composition of morphisms which is the same as the whole structure of $\C$. Nevertheless, the category $\widehat{\C}$ is of great importance for us. As an example consider the EI-category \[\mathcal{C}: \begin{xy}\xymatrix{ x\ar@(ul,dl)_{f} \ar@/^0.8cm/[rr]^{i_1}\ar@/^0.3cm/[rr]^{i_2}\ar@/_0.3cm/[rr]^{i_3}\ar@/_0.8cm/[rr]^{i_4} &  &y   ,} \end{xy} \]
satisfying the relations $f^4 = 1_x$ and $i_1 f = i_2$, $i_2 f = i_3$, $i_3 f = i_4$, $i_4 f = i_1$ and the EI-category $$\C': \begin{xy}\xymatrix{ a \ar@(ul,dl)_g \ar[r]^{h} & b ,}\end{xy}$$ with the relations $g^4 = 1_a$ and $gh = h$. Then both $\widehat{\C}$ and $\widehat{\C'}$ are the path category of $A_2$ and $\C$ and $\C'$ have the same automorphism groups. However, they are not equivalent and the associated category algebras have completely different representation-theoretic properties. We will later see that $\C$ is representation-infinite and $\C'$ has finite representation type over any algebraically closed field $k$.
\item[(2)] By construction of $\widehat{\C}$ we have the quotient functor $G:\C \to \widehat{\C}$ which is the identity on objects and surjective on morphisms. This functor induces a fully faithful embedding of $\mod k\widehat{\C}$ into $\mod k\C$. The existence of this embedding implies that if $k\widehat{\C}$ is of infinite representation type then $k\C$ is of infinite type as well. It is therefore natural to ask for a classification of representation-finite EI-categories with only trivial endomorphisms as a starting point.
\end{enumerate}
\end{Rem}
\begin{thm}\label{endotrivial}
Let $\mathcal{C}$ be an EI-category with only trivial endomorphisms. Then $\mathcal{C}$ is of finite representation type if and only if $k\mathcal{C}$ is Morita-equivalent to an incidence algebra of finite representation type.

\end{thm}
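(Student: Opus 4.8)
The plan is to prove both implications, the backward one being immediate and the forward one resting on a single reduction to a generalised Kronecker algebra. For ``$\Leftarrow$'', if $k\mathcal{C}$ is Morita-equivalent to an incidence algebra of finite representation type then, since Morita-equivalent algebras have equivalent module categories, $k\mathcal{C}$ is itself of finite representation type. For ``$\Rightarrow$'' I would first record that $k\mathcal{C}$ is automatically basic: endotriviality gives $1_x\,k\mathcal{C}\,1_x = k\,\mathcal{C}(x,x)=k$ for every object $x$, so the $1_x$ form a complete set of primitive orthogonal idempotents, and (after replacing $\mathcal{C}$ by a skeleton, which alters $k\mathcal{C}$ only up to Morita equivalence) antisymmetry of the poset $(\Ob\mathcal{C},\le)$ recalled in the excerpt shows $k\mathcal{C}\,1_x\not\cong k\mathcal{C}\,1_y$ for $x\neq y$. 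Hence it suffices to show: if $\mathcal{C}$ is skeletal and endotrivial with $k\mathcal{C}$ representation-finite, then $|\mathcal{C}(x,y)|\le 1$ for all objects $x,y$. Once this is known, the relation $x\le y\Leftrightarrow\mathcal{C}(x,y)\neq\emptyset$ exhibits $\mathcal{C}$ as the poset category of $(\Ob\mathcal{C},\le)$ (composition being forced by uniqueness of morphisms), so $k\mathcal{C}$ is literally the incidence algebra of that poset, which is of finite representation type by hypothesis.

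The core step is the following. Assume for contradiction that $x\neq y$ with $m:=|\mathcal{C}(x,y)|\ge 2$; by antisymmetry at most one of $\mathcal{C}(x,y)$, $\mathcal{C}(y,x)$ is non-empty, so we may take $\mathcal{C}(y,x)=\emptyset$. Setting $e=1_x+1_y$, the algebra $e\,k\mathcal{C}\,e=\bigoplus_{a,b\in\{x,y\}}k\,\mathcal{C}(a,b)$ is precisely the category algebra of the full subcategory $\mathcal{D}$ of $\mathcal{C}$ on $\{x,y\}$; as $\mathcal{D}$ is endotrivial with no morphism $y\to x$ and with $m\ge 2$ morphisms $x\to y$, the algebra $e\,k\mathcal{C}\,e\cong k\mathcal{D}$ is the path algebra of the generalised Kronecker quiver with $m$ arrows, which is of infinite representation type. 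I would then invoke the fact that representation-finiteness passes to idempotent subalgebras: for $A=k\mathcal{C}$ the induction functor $Ae\otimes_{eAe}(-)\colon\mod(eAe)\to\mod A$ is left adjoint to $M\mapsto eM$, its unit $N\to eAe\otimes_{eAe}N=N$ is the identity, so it is fully faithful, hence it injects isomorphism classes of indecomposable $eAe$-modules into those of $A$-modules (it identifies $\End$-rings, so matches local idempotents and reflects isomorphisms). Thus $k\mathcal{C}$ representation-finite forces $e\,k\mathcal{C}\,e$ representation-finite, a contradiction, which finishes the forward direction.

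The step I expect to be the main obstacle --- or at any rate the one most easily mishandled --- is this transfer of representation type to the idempotent truncation $e\,k\mathcal{C}\,e$. The tempting shortcut of ``extending by zero'' a Kronecker representation of $\mathcal{D}$ to a functor on all of $\mathcal{C}$ is \emph{not} functorial in general, because a morphism $x\to y$ in $\mathcal{C}$ may factor through an object outside $\{x,y\}$ whereas its putative extension by zero cannot; the correct device is the induction functor $Ae\otimes_{eAe}(-)$ above (or a citation of the standard fact that $eAe$ is representation-finite whenever $A$ is). Everything else is routine bookkeeping with the poset structure on $\Ob\mathcal{C}$ already set up in the excerpt, together with the classical fact that the generalised Kronecker quivers with at least two arrows are representation-infinite.
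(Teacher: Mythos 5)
Your proposal is correct and follows essentially the same route as the paper: the backward direction is immediate, and the forward direction reduces (after passing to a skeleton) to showing $|\mathcal{C}(x,y)|\le 1$ by exhibiting a representation-infinite Kronecker-type subcategory, after which $k\mathcal{C}$ is identified with the incidence algebra of $(\Ob\mathcal{C},\le)$. The only difference is that you justify the embedding carefully via the idempotent truncation $e\,k\mathcal{C}\,e$ and the fully faithful induction functor $k\mathcal{C}e\otimes_{e k\mathcal{C}e}(-)$, a step the paper merely asserts as a fully faithful embedding of $2$-Kronecker representations.
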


\begin{proof}
Let $\mathcal{C}$  be endotrivial and of finite representation type. We have to show that $k\mathcal{C}$ is Morita-equivalent to an incidence algebra of finite type. The other direction in the theorem is trivial. Again, we may (up to Morita-equivalence of $k\mathcal{C}$) assume that $\mathcal{C}$ is skeletal.\\
Since $\mathcal{C}$ is representation-finite, there are no objects $x,y \in \mathcal{C}$ with two distinct morphisms $f,g: x \to y$. Otherwise one gets a fully faithful embedding of the category of representations of the $2$-Kronecker into $\mod k\mathcal{C}$. Since $\mathcal{C}$ is skeletal its set of objects carries a natural structure of a finite poset defined by 
\[ x \leq y :\Leftrightarrow \exists f \in \mathcal{C}(x,y). \]
Using, that $\C(x,y) \leq 1$ for all $x,y \in \Ob \C$, we get that $k\mathcal{C}$ is the incidence algebra associated to the finite poset $(\Ob\mathcal{C},\leq)$ and the claim follows.
\end{proof}
\begin{cor}
Let $\C$ be a finite and skeletal EI-category and $k$ a field such that $k\C$ is representation-finite. Then $\widehat{\C}$ is a finite poset of finite representation type.
\end{cor}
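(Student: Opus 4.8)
The plan is to combine the fully faithful embedding $\mod k\widehat{\C}\hookrightarrow\mod k\C$ recorded in the remark preceding Theorem~\ref{endotrivial} with Theorem~\ref{endotrivial} itself. First I would note two structural facts about $\widehat{\C}$. By construction it is endotrivial. Moreover it is again skeletal: the quotient functor $G\colon\C\to\widehat{\C}$ is the identity on objects, and if two objects $x,y$ became isomorphic in $\widehat{\C}$ then $\widehat{\C}(x,y)\neq\emptyset$ and $\widehat{\C}(y,x)\neq\emptyset$, hence $\C(x,y)\neq\emptyset$ and $\C(y,x)\neq\emptyset$, so there are morphisms $f\colon x\to y$ and $g\colon y\to x$ in $\C$; the composites $gf$ and $fg$ are endomorphisms, hence isomorphisms since $\C$ is EI, and therefore $f$ is an isomorphism, forcing $x=y$. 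Consequently the preorder $x\leq y:\Leftrightarrow\widehat{\C}(x,y)\neq\emptyset$ on $\Ob\widehat{\C}=\Ob\C$ is antisymmetric and agrees with the poset structure on $\Ob\C$ induced by $\C$.

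Next, since $\mod k\widehat{\C}$ embeds fully faithfully into $\mod k\C$ and $k\C$ is representation-finite by hypothesis, $k\widehat{\C}$ is representation-finite as well. Now $\widehat{\C}$ is a skeletal endotrivial EI-category of finite representation type, so I would run the argument from the proof of Theorem~\ref{endotrivial} directly on $\widehat{\C}$: representation-finiteness rules out two distinct parallel morphisms $f,g\colon x\to y$ (otherwise the representations of the $2$-Kronecker quiver would embed into $\mod k\widehat{\C}$), so $\lvert\widehat{\C}(x,y)\rvert\leq 1$ for all objects $x,y$. A finite skeletal category with at most one morphism between any two objects is precisely the category attached to the finite poset $(\Ob\widehat{\C},\leq)$; hence $\widehat{\C}$ is a finite poset, and it is of finite representation type because $k\widehat{\C}$ is.

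The only delicate point is the skeletality of $\widehat{\C}$, i.e.\ that passing to the quotient does not identify non-isomorphic objects; this is what promotes the conclusion of Theorem~\ref{endotrivial} from ``$k\widehat{\C}$ is Morita-equivalent to an incidence algebra'' to the sharper statement that $\widehat{\C}$ itself is the category of a finite poset. Everything else is a straightforward combination of the embedding $\mod k\widehat{\C}\hookrightarrow\mod k\C$ and Theorem~\ref{endotrivial}, so I do not anticipate a genuine obstacle.
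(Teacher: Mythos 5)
Your argument is correct and follows essentially the same route the paper intends: the fully faithful embedding $\mod k\widehat{\C}\hookrightarrow\mod k\C$ from the remark forces $k\widehat{\C}$ to be representation-finite, and the parallel-morphism argument of Theorem~\ref{endotrivial} applied to the endotrivial category $\widehat{\C}$ identifies it with the category of the finite poset $(\Ob\C,\leq)$. Your explicit verification that $\widehat{\C}$ remains skeletal (so that the conclusion is the poset itself rather than merely Morita-equivalence to an incidence algebra) is a point the paper leaves implicit, but it is the same proof.
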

\begin{Rem}
Suppose $\C$ is an endotrivial EI-category. Then its set of objects naturally carries the structure of a finite partially ordered set, but in general it will not be the case that $\C$ is the category associated to this poset. For instance, take $\C$ to be the category \[ \begin{xy}\xymatrix{ & d &  \\ b\ar[ur]^{\gamma} & & c \ar[ul]_{\delta}, \\ & a \ar[ul]^{\alpha}\ar[ur]_{\beta} & }\end{xy} \]  without any relations. Then the category associated to $(\Ob\C,\leq)$ is $\C$ modulo the relation $\gamma\alpha = \delta\beta$.
\end{Rem}
\section{EI-categories with two objects: first observations}
The easiest class of EI-categories for which the representation type has not yet been investigated is the class of EI-categories with two non-isomorphic objects. Clearly, a representation-finite EI-category $\C$ with two objects has to satisfy $\widehat{\C} = A_2$ and both group algebras attached to the two objects have to be of finite representation type.

An arbitrary EI-category algebra $k\C$ is representation-infinite if there exists at least one full subcategory $\D \subseteq \C$ such that the algebra $k\D$ is representation-infinite. Therefore, the treatment of EI-categories with two objects yields necessary criteria for an EI-category algebra to be representation-finite. 

\subsection{The easiest example}
We will now discuss an example of an EI-category to illustrate the difficulties one faces when dealing with these categories.

We consider an EI-category $\mathcal{C}$ with two objects $x$ and $y$ such that $\End(x) = \langle f \rangle \cong \mathbb{Z}_2$ and $\End(y) = \langle g \rangle \cong \mathbb{Z}_2$. Under these assumptions there are 5 different EI-categories that can appear, namely:
\begin{itemize}
\item[(1)] $\mathcal{C}(x,y) = \Set{ i }$ with $i\circ f = i$ and $g \circ i = i$;
\item[(2)] $\mathcal{C}(x,y) = \Set{i_1, i_2 }$ with $f$ acting trivially on $\Set{i_1, i_2 }$ and $g$ permuting the two morphisms;
\item[(3)] $\mathcal{C}(x,y) = \Set{i_1, i_2 }$ with $f$ permuting $i_1$ and $i_2$ and $g$ acting trivially;
\item[(4)] $\mathcal{C}(x,y) = \Set{i_1, i_2 }$ with $f$ and $g$ permuting $i_1$ and $i_2$;
\item[(5)] $\mathcal{C}(x,y) = \Set{i_1,i_2,i_3,i_4}$ with $i_1 \circ f = i_2$, $i_3 \circ f = i_4$, $g \circ i_1 = i_3$ and $g \circ i_2 = i_4$.
\end{itemize}
The second case has briefly been studied by Xu in \cite{Fei1}, where he claims that this category is of infinite representation type in characteristic $2$, but the representations he constructed turn out to be decomposable and (as we will see later) and the category is of finite representation type. It will turn out that the representation type of an EI-category with two objects is mainly governed by the group action of the endomorphism groups on the set of morphisms between the two non-isomorphic objects. 

The representation type of the 5 cases from above is computed via the calculation of the Ext-quiver and then the use of quiver techniques. This leads to the following results.
\begin{itemize}
\item[(1)]
The computation of the Ext-quiver of $k\mathcal{C}$ in characteristic $2$ yields the quiver
$$ Q: \begin{xy}\xymatrix{ \circ \ar@(ul,dl)_{\alpha} \ar[r]^{\beta} & \circ \ar@(ur,dr)^{\gamma} } \end{xy}.$$
We fix this quiver for the remainder of this subsection.

An easy calculation shows that $k\mathcal{C}$ is isomorphic to $kQ/I$, where $I$ is the admissible ideal generated by the zero relations $ 0 = \alpha^2 = \gamma^2 = \beta \alpha = \gamma \beta $. Therefore, $k\mathcal{C}$ is a string algebra without any bands and those algebras are known to be of finite representation type. Alternatively one can consult the list at the end of \cite{Bongartz-Gabriel} and see that this algebra is of finite representation type and also find its Auslander-Reiten quiver. 

If char$(k) \neq 2$, then the algebra $k\mathcal{C}$ is isomorphic to the path algebra $kQ$ where $Q$ is the quiver
\[ \begin{xy}\xymatrix{ \circ & \circ \\ \circ \ar[r] & \circ, }\end{xy}\]
which is obviously of finite representation type.
The other cases are discussed analogously and we deduce the following results (with respect to the numbering from the beginning of this subsection). 
\item[(2)] $k\mathcal{C} \simeq kQ/\langle \alpha^2, \gamma^2, \beta\alpha \rangle $ if char$(k) = 2$ and this algebra is of finite representation type, again as  a string algebra without any bands. If the characteristic is different from $2$, then $k\mathcal{C}$ is the path algebra of the representation-finite quiver
\[ \begin{xy}\xymatrix{ \circ & \circ \\ \circ \ar[ur] \ar[r] & \circ. }\end{xy}\]
\item[(3)] This situation is dual to the one above. In char$(k) = 2$ we have $k\mathcal{C} \simeq kQ/\langle \alpha^2, \gamma^2, \gamma \beta \rangle$, which again is a string algebra without bands, and in char$(k) \neq 2$ the algebra $k\mathcal{C}$ is hereditary of finite representation type as the path algebra of the following quiver
\[ \begin{xy}\xymatrix{ \circ \ar[dr]& \circ \\ \circ  \ar[r] & \circ. }\end{xy}\]
\item[(4)] If char$(k) = 2$, then $k\mathcal{C} \simeq kQ / \langle \alpha^2, \gamma^2, \beta \alpha - \gamma \beta \rangle$. This algebra is representation-finite, which can be seen by knitting the AR-quiver using covering theory as we explained before (or see for example \cite{Gabriel}). If the characteristic is different from $2$, then the algebra $k\mathcal{C}$ is isomorphic to the path algebra of the quiver
\[\begin{xy}\xymatrix{ \circ \ar[r] & \circ \\ \circ  \ar[r] & \circ, }\end{xy}\]
which is of finite representation type.
\item[(5)] If char$(k) = 2$, then we get that $k\mathcal{C} \simeq kQ/\langle \alpha^2, \gamma^2 \rangle$ which is a string algebra with infinitely many bands and therefore it is of infinite representation type. In any characteristic different from $2$ the algebra $k\mathcal{C}$ is hereditary (as we know from work of Xu) and in this particular case isomorphic to the path algebra of the following quiver
\[\begin{xy}\xymatrix{ \circ \ar[r]\ar[dr] & \circ \\ \circ \ar[ur] \ar[r] & \circ, }\end{xy}\]
which is a quiver with underlying Eucledian graph $\tilde{A}_3$.
\end{itemize}
\subsection{The characteristic plays a role}
In this subsection we present an example, which shows that the representation type of an EI-category algebra $k\C$, which does not come from a group, depends on the characteristic of the field $k$. The reason for this is that the same is true for group algebras. The easiest example where different characteristics of $k$ yield different representation types is the following one.
\begin{exm}
Consider the EI-category
\[\mathcal{C}: \begin{xy}\xymatrix{ x\ar@(ul,dl)_{g} \ar@/^0.4cm/[rr]^{f_1}\ar[rr]^{f_2}\ar@/_0.4cm/[rr]^{f_3} &  &y \ar@(ur,dr)^{h} } \end{xy}, \]
with the relations $g^2 = 1_x, \;$ $f_i g = f_i,\;$ for $ i=1, 2, 3, \;$  $h^3 = 1_y$ and $hf_1 = f_2, \;$  $hf_2 = f_3,\;$  $hf_3 = f_1$. \\
We will now show that the associated $k$-algebra $k\mathcal{C}$ is of finite representation type if and only if char$(k) \neq 2$. First of all, we suppose that char$(k) \neq 2,3$. Then $k\mathcal{C}$ is hereditary (and basic) and we compute the Ext-quiver to be 
\[\begin{xy}\xymatrix{ \circ \ar[r] \ar[dr] \ar[ddr] & \circ \\ \circ & \circ \\  & \circ, }\end{xy}\]
which is of finite representation type as the union of quivers with underlying graphs $A_1$ and $D_4$. 

Now we assume that the characteristic of $k$ is $3$. Then the radical of $k\mathcal{C}$ is $\langle 1_y - h, f_1, f_2, f_3 \rangle$ and rad$^2 k\mathcal{C} = \langle (1_y - h)^2, f_1 - f_2, f_2 - f_3 \rangle$ where $\langle$ --- $\rangle$ denotes the $k$-span. A complete list of primitive, orthogonal idempotents is given by $1_y, \frac{1}{2}(1_x + g), \frac{1}{2}(1_x -g)$. Therefore, $k\mathcal{C}$ is isomorphic to the path algebra of the quiver 
\[ \begin{xy}\xymatrix{ \circ \ar[r]^\beta & \circ \ar@(ur,dr)^{\gamma} \\ \circ & \;  }\end{xy},\]
bound by the relation $\gamma^3 = 0$. This bound path algebra is of finite representation type since it is the union of $A_1$ with a bound path algebra whose module category can be embedded into the module category of $k(\begin{xy}\xymatrix{ \circ \ar@<5pt>[r]^{\nu} & \circ \ar@<5pt>[l]^{\delta} \ar@(ur,dr)^{\rho}}\end{xy}) / \langle \rho^3,\delta\rho,\delta\nu,\delta\rho\nu \rangle$. The latter is known to be of finite type by work of Bautista,  Gabriel, Roiter and Salmeron \cite{Bautistaetal} and work of Bongartz and Gabriel \cite{Bongartz-Gabriel}. 

The last case is the one where $\ch(k) = 2$. Here the radical of $k\mathcal{C}$ is $\langle 1_x + g,f_1,f_2,f_3 \rangle$ and its square is zero. A complete list of primitive, orthogonal idempotents is given by $1_x, \frac{1}{3}(1_y + h + h^2), \frac{1}{3}(1_y + \varepsilon h + \varepsilon^2 h^2), \frac{1}{3}(1_y + \varepsilon^2 h + \varepsilon h^2)$, where $\varepsilon$ denotes a primitive third root of unity in $k$. Therefore, we deduce for the Ext-quiver of $k\mathcal{C}$ the quiver 
\[ \begin{xy}\xymatrix{& & \circ \\ \Gamma: &\circ \ar@(ul,dl)_{\alpha} \ar[ur]^{\beta_1} \ar[r]^{\beta_2} \ar[dr]_{\beta_3} & \circ ,\\ & & \circ }\end{xy}\]
and $k\mathcal{C}$ is isomorphic to $k\Gamma / \langle \alpha^2, \beta_1\alpha, \beta_2\alpha, \beta_3\alpha \rangle$. This bound quiver is of infinite representation type since its universal cover contains the $4$-subspace quiver without relations. It is also not difficult to write down a $1$-parameter family of indecomposables of dimension vector $(3,1,1,1)$.
\end{exm}
The discussion of the first case in the example can be generalised to the following Proposition, which provides us with a situation where we can prove that the EI-category algebras in question always have finite representation type.
\begin{prop}
Let $\mathcal{C}$ be a finite, skeletal EI-category with 2 objects $x$ and $y$ such that $\mathcal{C}(x,y) = \Set{f} $ and the two groups $\End(x)$ and $\End(y)$ are abelian. Let $k$ be an algebraically closed field whose characteristic neither divides the order of $\End(x)$ nor the order of $\End(y)$. Then $k\mathcal{C}$ is of finite representation type.
\end{prop}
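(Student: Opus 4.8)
The plan is to show that under the stated hypotheses $k\C$ is Morita-equivalent to a hereditary algebra whose underlying graph is a disjoint union of Dynkin diagrams, and hence is representation-finite by Gabriel's theorem. The starting point is that since $\ch(k)$ divides neither $|\End(x)|$ nor $|\End(y)|$, Maschke's theorem tells us that both $k\End(x)$ and $k\End(y)$ are semisimple; since $k$ is algebraically closed and both groups are abelian, each of these group algebras is a product of copies of $k$, say $k\End(x)\cong k^{m}$ and $k\End(y)\cong k^{n}$ where $m=|\End(x)|$ and $n=|\End(y)|$. A complete set of primitive orthogonal idempotents of $k\End(x)$ is given by the characters $e_\chi = \frac{1}{|\End(x)|}\sum_{h}\chi(h^{-1})h$, $\chi\in\widehat{\End(x)}$, and similarly $e_\psi$ for $\End(y)$; together with the $\chi$'s and $\psi$'s these give a complete set of primitive orthogonal idempotents of $k\C$, so the simple $k\C$-modules are indexed by $\widehat{\End(x)}\sqcup\widehat{\End(y)}$.

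Next I would compute the Ext-quiver. There are no arrows within the $x$-block or within the $y$-block (the blocks are semisimple local pieces, i.e.\ each $e_\chi(k\End(x))e_\chi\cong k$), and no arrows from the $y$-block to the $x$-block since $\C(y,x)=\emptyset$. So the only possible arrows go from vertices $\chi\in\widehat{\End(x)}$ to vertices $\psi\in\widehat{\End(y)}$, and the number of such arrows is $\dim e_\psi (k\C) e_\chi / (\rad^2)$; because $\C(x,y)=\{f\}$, the space $e_\psi(k\C)e_\chi$ is spanned by the elements $e_\psi\, g'\!f g\, e_\chi$ with $g\in\End(x),g'\in\End(y)$, and a short computation using the equivariance $g'\!f g = \chi_0(g)\psi_0(g')\,f$ for suitable characters (the $\End(x)\times\End(y)$-action on the one-element set $\C(x,y)$ factors through a pair of characters, in fact through the trivial characters since the action must be trivial on a singleton) shows that $e_\psi(k\C)e_\chi$ is one-dimensional when $\chi=\chi_0$ and $\psi=\psi_0$ are those distinguished characters and zero otherwise. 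Hence the Ext-quiver consists of a single arrow $\chi_0\to\psi_0$ together with $m-1+n-1$ isolated vertices, and $k\C$ is Morita-equivalent to $k$ of this quiver (the algebra is hereditary since $\rad^2=0$ on the off-diagonal part and the diagonal blocks are semisimple).

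Finally, a single arrow is the Dynkin diagram $A_2$ and an isolated vertex is $A_1$, so the underlying graph of the Ext-quiver is a disjoint union of copies of $A_1$ and one copy of $A_2$; by Gabriel's theorem the path algebra is representation-finite, and therefore so is $k\C$. The main obstacle I anticipate is the bookkeeping in the middle step: one must pin down precisely which pair of characters $(\chi_0,\psi_0)$ supports the unique arrow and verify that $\rad^2 k\C$ really does kill all the higher composites $f\mapsto g'fg$, i.e.\ that no relations are needed; this is where the hypothesis $\C(x,y)=\{f\}$ (ensuring there is essentially one morphism to be hit by the two group actions) does the real work, and it is exactly the feature that fails in the five-element example (5) above where the $\tilde A_3$ quiver appears. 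Everything else is a routine application of Maschke's theorem and Gabriel's theorem.
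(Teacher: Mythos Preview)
Your proposal is correct and follows essentially the same approach as the paper: show $k\C$ is basic and hereditary, identify the Ext-quiver as one copy of $A_2$ together with $m+n-2$ isolated vertices, and invoke Gabriel's theorem. The only real difference is in how the single arrow is found: the paper observes $\dim_k k\C = m+n+1$ against $m+n$ vertices and concludes by a dimension count (citing Xu for hereditariness), whereas you compute $e_\psi(k\C)e_\chi$ explicitly with character idempotents and locate the arrow at the pair of trivial characters; your route is slightly longer but more explicit and avoids the external citation.
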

\begin{proof}
Let $n:= |\End(X)|$ and $m:= |\End(Y)|$. 

First of all we note that the algebra $k\mathcal{C}$ is hereditary (see Theorem 4.2.4 in \cite{Fei1}) and basic since the two groups are abelian. Therefore, it is isomorphic to the path algebra of its Ext-quiver. 

Since the groups $\End(x)$ and $\End(y)$ are assumed to be abelian of order not divisible by the characteristic of our field, it is known that $k\mathcal{C}$ has exactly $m+n$ isomorphism classes of simple modules (see \cite{Serre} and \cite{Lueck}) and all the simples do not have self-extensions. This implies, that the Ext-quiver $\Gamma(k\mathcal{C})$ has $m+n$ vertices. Furthermore, we know that the $k$-dimension of $k\mathcal{C}$ is $m+n+1$. This yields that we have exactly one arrow in $\Gamma(k\mathcal{C})$ and the claim follows.
\end{proof}
\section{Free action implies infinite type}
In this section we prove the fact that free action of the automorphism groups of an EI-category $\C$ with two objects $x$ and $y$ on $\C(x,y)$ implies infinite representation type in any characteristic. This is the most general result we will achieve in our treatment of EI-categories with two objects. The proof is carried out by considering various cases. The most interesting cases will be presented as Lemmata starting with the following one.
\begin{lem}
Let $\mathcal{C}$ be an EI-category with two non-isomorphic objects $x, y$ and abelian endomorphism groups $\End(x)$ and $\End(y)$ of order $\geq 2$ such that the group action of $\End(x) \times \End(y)$ on $\mathcal{C}(x,y)$ is free and transitive. Let $k$ be an algebraically closed field which characteristic neither divides the order of $\End(x)$ nor the order of $\End(y)$. Then $k\mathcal{C}$ is of infinite representation type.
\end{lem}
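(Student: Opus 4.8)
The plan is to identify $k\mathcal{C}$ explicitly as a path algebra and then apply Gabriel's theorem. Write $G = \End(x)$, $H = \End(y)$, $n = |G| \geq 2$, $m = |H| \geq 2$, and let $M \subseteq k\mathcal{C}$ be the $k$-span of $\mathcal{C}(x,y)$, i.e.\ of the morphisms $x \to y$. Since $x$ and $y$ are non-isomorphic and $\mathcal{C}$ is EI there are no morphisms $y \to x$, so $M\cdot M = 0$ and $M$ is a two-sided ideal of $k\mathcal{C}$. The quotient $k\mathcal{C}/M$ has as basis the endomorphisms of $x$ together with those of $y$, and since these do not compose with each other it is isomorphic to $kG \times kH$; by Maschke's theorem (the characteristic divides neither $n$ nor $m$) this is semisimple, and since $k$ is algebraically closed and $G,H$ are abelian we get $kG \cong k^{n}$ and $kH \cong k^{m}$. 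Hence $M$ is nilpotent with semisimple quotient, so $M = \rad(k\mathcal{C})$, we have $\rad^{2}(k\mathcal{C}) = 0$, and $k\mathcal{C}$ is basic with exactly $n+m$ isomorphism classes of simple modules, $n$ of them supported at $x$ and $m$ at $y$.

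Next I would pin down the Ext-quiver $Q$ of $k\mathcal{C}$. Writing $1_x = \e_1 + \dots + \e_n$ and $1_y = \eta_1 + \dots + \eta_m$ for the primitive idempotents coming from $kG \cong k^{n}$ and $kH \cong k^{m}$, one checks directly that $\e_i M \e_{i'} = 0$, $\eta_j M \eta_{j'} = 0$ and $\e_i M \eta_j = 0$ (the first two because endomorphisms of a single object do not compose through a morphism of $M$, the last because $\mathcal{C}(y,x) = \emptyset$), so only the spaces $\eta_j M \e_i$ can be nonzero. Therefore $Q$ has no loops and every arrow runs from one of the $n$ vertices attached to $x$ to one of the $m$ vertices attached to $y$; in particular $Q$ has no path of length $\geq 2$. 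Consequently the admissible ideal in the presentation $k\mathcal{C} \cong kQ/I$ lies in $\rad^{2}(kQ) = 0$ and hence vanishes, so $k\mathcal{C} \cong kQ$ is hereditary. (Alternatively one may simply invoke the fact, used already in the Proposition above, that $k\mathcal{C}$ is hereditary here — Xu's Theorem~4.2.4 in \cite{Fei1} — together with basicness.)

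It then remains to count. The number of arrows of $Q$ equals $\dim_k \rad(k\mathcal{C}) = \dim_k M = |\mathcal{C}(x,y)|$, and since the action of $G \times H$ on $\mathcal{C}(x,y)$ is free and transitive this is precisely $nm$, while $Q$ has $n+m$ vertices. Because $n,m \geq 2$ we have $(n-1)(m-1) \geq 1$, i.e.\ $nm \geq n+m$, so $Q$ has at least as many arrows as vertices; hence at least one connected component of $Q$ is not a tree, and the underlying graph of $Q$ is not a disjoint union of Dynkin diagrams. By Gabriel's theorem $kQ \cong k\mathcal{C}$ is of infinite representation type.

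I expect the only genuinely delicate point to be the identification $k\mathcal{C} \cong kQ$ — that is, that the presentation carries no relations: one has to be sure that $\rad^{2}(k\mathcal{C}) = 0$ really forces the admissible ideal to vanish and that the dimension bookkeeping is exact, so that the "extra" morphisms of $\mathcal{C}(x,y)$ all turn into honest arrows rather than being absorbed into relations. Everything else is either a short computation inside $k\mathcal{C}$ or a citation (Maschke, the structure of $kG$ for $G$ abelian over an algebraically closed field of good characteristic, and Gabriel's theorem); if desired, one could also make the conclusion concrete by exhibiting a one-parameter family of indecomposables once $Q$ is seen to contain a $4$-cycle.
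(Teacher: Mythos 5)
Your proof is correct and follows essentially the same route as the paper: identify $k\mathcal{C}$ as the path algebra of its Ext-quiver (the paper cites Xu's hereditariness result plus basicness, where you additionally give a self-contained $\rad^2=0$ argument), then count $n+m$ vertices against $nm=\dim_k\rad(k\mathcal{C})$ arrows and conclude via Gabriel's theorem that the quiver is not Dynkin, hence of infinite representation type.
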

\begin{proof}
Analogous to the proof in the last section, we have that $k\mathcal{C}$ is isomorphic to the path algebra of its Ext-quiver which has $m+n$ vertices. Since $\dim_k k\mathcal{C} = m + n + m\cdot n$ there are $m\cdot n$ arrows in $\Gamma(k\mathcal{C})$. Therefore, the underlying graph of the Ext-quiver is not Dynkin.
\end{proof}

If both group algebras are not semisimple, we can prove the assertion without constructing representations or computing the Ext-quiver.
\begin{lem}
Let $\mathcal{C}$ be an EI-category with two objects $x$ and $y$ and let $k$ be an algebraically closed field of positive characteristic $p$ dividing both $|\End(x)|$ and $|\End(y)|$. Further, we assume that $\End(x) \times \End(y)$ acts freely on $\mathcal{C}(X,Y)$. Then $k\mathcal{C}$ is of infinite representation type.
\end{lem}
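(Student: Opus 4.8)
The idea is to exploit the hypothesis that $p$ divides both group orders, so that both $k\End(x)$ and $k\End(y)$ are non-semisimple local-ish algebras (their radicals are nonzero), and combine this with the freeness of the action to locate inside $k\C$ a subquotient or idempotent-truncation that is visibly of infinite type — most naturally an algebra that has a $2$-Kronecker-type subquiver with relations, or more directly one of the known minimal infinite-type algebras. First I would set $G = \End(x)$, $H = \End(y)$, $m = |G|$, $n = |H|$, and $M = \C(x,y)$; by freeness $|M| = mn \cdot (\text{number of orbits}) \geq mn$, and since $\widehat{\C} = A_2$ for any candidate representation-finite two-object category, we may as well assume the action is transitive, so $|M| = mn$ and $M \cong G \times H$ as a $(G,H)$-biset. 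Then $k\C$ decomposes as a vector space as $kG \oplus kH \oplus kM$ with $kM \cong kG \otimes_k kH$ as a $(kG,kH)$-bimodule; this is the analogue of the "free and transitive" computation in the preceding lemma, but now in the modular setting.

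**The core argument.** The plan is to pick a primitive idempotent $e \in kG$ lying over a simple $kG$-module $S$ and a primitive idempotent $f \in kH$ over a simple $kH$-module $T$, and to consider the truncated algebra $\Lambda = (e+f)\,k\C\,(e+f)$. Since $k\C$ is a (split, basic up to Morita) algebra, $\Lambda$ is a subalgebra of an algebra Morita-equivalent to $k\C$, and infinite representation type of $\Lambda$ would force infinite type of $k\C$. Now $e\,kG\,e \cong \End_{kG}(\text{proj. cover of }S)$ is a non-semisimple local algebra — here is where $p \mid m$ enters, guaranteeing $\operatorname{rad}(e\,kG\,e) \neq 0$ — and likewise $f\,kH\,f$ is non-semisimple local. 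The bimodule $e\,kM\,f \cong e\,kG \otimes_k kH\,f$ is, as a module over $e\,kG\,e$ on the left and $f\,kH\,f$ on the right, free of positive rank on each side. So $\Lambda$ is a triangular algebra $\bigl[\begin{smallmatrix} A & 0 \\ B & C\end{smallmatrix}\bigr]$ with $A, C$ non-semisimple local and $B$ a bimodule that is "large" (free of positive rank over both local rings). The remaining task is to check that such a triangular algebra — with loops of order-$p$ type at both vertices plus at least one connecting arrow that is not killed by the radicals on either side — cannot be of finite representation type. This should follow either by exhibiting a one-parameter family of indecomposables directly (matrix problem: a nilpotent operator at each vertex plus a linear map intertwining them modulo the nilpotents, which already in the $p=2$, rank-one case is the "two $\mathbb{Z}_2$'s glued freely" situation shown infinite in case (5) of the example), or by invoking the Bongartz–Gabriel / Bautista-et-al. list to identify a minimal infinite-type subquiver.

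**The main obstacle.** The delicate point is the passage to the Ext-quiver in the purely modular case: unlike the semisimple-coefficient lemma just before, where the number of vertices and arrows is forced by dimension count, here $k\End(x)$ and $k\End(y)$ need not be basic, so I cannot simply read off the quiver of $k\C$ from $m$, $n$, and $mn$. The truncation-by-idempotents device is designed precisely to sidestep this — it replaces "compute the global Ext-quiver" with "find one bad $2\times 2$ triangular corner" — but I still need to argue carefully that $e\,kM\,f$ is genuinely nonzero and free of positive rank as a bimodule over the two local corners, i.e. that the chosen simples $S, T$ really do appear with positive multiplicity on both sides of $kM \cong kG\otimes_k kH$. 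That is immediate from $kM \cong kG \otimes_k kH$ together with $eS \neq 0 \neq fT$, but it must be said. The other place requiring care is the base case of the infinite-type verification for the truncated triangular algebra: I expect the cleanest route is to reduce, by a further surjection killing $\operatorname{rad}^2$ of each local corner down to the $k[t]/(t^2)$ quotient (legitimate since $p \mid m, n$ ensures these quotients survive), to exactly the char-$2$ algebra $k\Gamma/\langle\alpha^2,\gamma^2,\ldots\rangle$ of case (5), which was already shown to be a string algebra with infinitely many bands. Making that reduction honest — that the defining relations of $k\C$ do not accidentally collapse the connecting arrow when one quotients the loops — is the real content, and I would devote the bulk of the written proof to it.
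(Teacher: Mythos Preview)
Your approach is workable in spirit but takes a substantially longer route than the paper, and some of the intermediate claims would need real justification. The paper's argument is a two--line embedding trick: writing $G=\End(x)$ and $H=\End(y)$, one defines a functor $F:\mod k(G\times H)\to \mod k\C$ by $F(M)(x)=F(M)(y)=M$, with $G$ acting through $G\times 1$ and $H$ through $1\times H$, and with each morphism in $\C(x,y)$ acting by the corresponding element of $G\times H$ (freeness of the biset action is exactly what makes this assignment well defined). A morphism $F(M)\to F(M')$ is then forced to be a diagonal pair $(\mu,\mu)$ with $\mu$ both $G$-- and $H$--equivariant, hence $(G\times H)$--equivariant; so $F$ is fully faithful. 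Since $p$ divides both $|G|$ and $|H|$, the Sylow $p$--subgroup of $G\times H$ is a nontrivial direct product and therefore not cyclic, so $k(G\times H)$ is of infinite representation type, and the embedding transports this to $k\C$.

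Compared with this, your idempotent--truncation plan has two soft spots. First, the assertion that $e\,kM\,f$ is \emph{free of positive rank over both local corners} is not automatic: even granting $kM\cong kH\otimes_k kG$, the left $e\,kG\,e$--module $e\,kG$ need not be free when $kG$ has several blocks, so the triangular algebra you build is not a priori the clean ``two loops plus a free arrow'' model you want. Second, and more seriously, the final reduction --- quotienting each corner down to $k[t]/(t^2)$ and checking that the connecting arrow survives without acquiring a relation $\gamma\beta=0$ or $\beta\alpha=0$ --- is precisely where one might land back in one of the \emph{finite}--type cases (1)--(4) of the earlier example rather than case~(5). You flag this yourself as ``the real content'', and it is; but the paper's embedding bypasses the issue entirely by working at the level of module categories rather than presentations. (A minor point: your justification ``$\Lambda$ is a subalgebra of something Morita equivalent to $k\C$'' is not the right reason infinite type passes up --- it is because $\Lambda=(e{+}f)\,k\C\,(e{+}f)$ is an idempotent truncation, so $\mod\Lambda$ is a quotient of $\mod k\C$.)
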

\begin{proof}
For simplicity we write $G:= \End(x)$ and $H:= \End(y)$. We will prove the theorem by constructing a fully faithful embedding $F:\mod k(G \times H) \to \mod k\mathcal{C}$. The construction of this functor is rather obvious. For $M \in \mod k(G\times H)$ let $F(M)(x) = M = F(M)(y)$ together with the natural actions of $G$ and $H$ on M given by $G \times \id_Y$ and $\id_X \times H$, respectively. Furthermore, we let $F(M)(\mathcal{C}(x,y)) = G \times H$. This is indeed a representation of $k\mathcal{C}$. Now let $\mu: M \to M'$ be a morphism in $\mod k(G\times H)$. We put $F(\mu) = (\mu,\mu)$ which gives a morphism $F(M) \to F(M')$ of $k\mathcal{C}$ modules. Finally this functor is fully faithful and $k$-linear by construction and it is known that $k(G\times H)$ (in this particular framework) is of infinite representation type.
\end{proof}
\begin{Rem}
If $\C$ is a skeletal EI-category with two objects $x,y$ such that $\Aut(x)\times\Aut(y)$ acts freely on $\C(x,y)$, then we can also localise the category $\C$ with respect to the set of morphisms $S:=\C(x,y)$ in the sense of Gabriel and Zisman \cite{GabrielZisman}. This gives a new category $\C[S]$ and the category of representations of $\C[S]$ is equivalent to the subcategory of representations of $\C$ consisting of all representations $V$ for which $V(f)$ is invertible for all $f$ in $\C(x,y)$. Then, it is easy to see that this category contains $\mod(\Aut(x)\times\Aut(y))$ as a full subcategory as we have seen in the previous proof.

One should note that the localisation of an EI-category is not again an EI-category in general, but if we assume that the EI-category has no parallel morphisms (i.e. $|\C(x,y)|\leq 1$ for all $x,y \in \Ob\C$), then every localisation is again EI.

\end{Rem}
For cyclic groups the computation of the Ext-quiver is rather easy, which gives the next lemma.
\begin{lem}
Let $\mathcal{C}$ be an EI-category with two non-isomorphic objects $x$ and $y$ such that the action of $\End(x) \times \End(y)$ on $\mathcal{C}(x,y)$ is free and $\End(x)$ and $\End(y)$ are cyclic of order $\geq 2$. Then $k\mathcal{C}$ is of infinite representation type for any algebraically closed field $k$.
\end{lem}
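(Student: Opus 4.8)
I would begin by normalising the situation and reading off the shape of $k\C$. Replacing $\C$ by a skeleton (harmless up to Morita equivalence) and writing $G=\End(x)$, $H=\End(y)$, $S=\C(x,y)$ — which we take to be nonempty, the freeness hypothesis being otherwise vacuous — the EI‑property together with $x\not\cong y$ forces $\C(y,x)=\emptyset$, so $k\C$ is the triangular matrix algebra built from the diagonal $kG,kH$ and the $(kH,kG)$‑bimodule $kS$. Picking one representative $f_1,\dots,f_r$ in each of the $r$ orbits of the free $G\times H$‑action on $S$, I would identify $\rep_k\C$ with the category of tuples $(U,W,\varphi_1,\dots,\varphi_r)$ with $U\in\mod kG$, $W\in\mod kH$ and $\varphi_s\in\Hom_k(U,W)$, morphisms being the compatible pairs of module maps; equivalently $kS\cong(kH\otimes_k kG)^{\oplus r}$ as a bimodule. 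If $r\ge 2$ I restrict to those tuples in which $U$ and $W$ are sums of copies of the respective trivial one‑dimensional modules: then every linear map is automatically equivariant, so this full subcategory is the representation category of the quiver with two vertices and $r$ parallel arrows, which is already of infinite type. Hence I may assume $r=1$, i.e.\ the action of $G\times H$ on $S$ is free and transitive and $|S|=|G|\,|H|$.

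Next I would dispose of most characteristics using the earlier lemmas of this section. Put $p=\ch k$. If $p=0$, or $p$ divides neither $|G|$ nor $|H|$, then $kG$ and $kH$ are semisimple and the statement is precisely the lemma of this section about abelian groups with a free and transitive action and $p$ not dividing the orders. If $p>0$ divides both $|G|$ and $|H|$, the statement is contained in the lemma providing the fully faithful embedding $\mod k(G\times H)\hookrightarrow\mod k\C$. Since $k\C^{\op}\cong(k\C)^{\op}$ has the same representation type as $k\C$, passing to $\C^{\op}$ if necessary leaves exactly one case: $p>0$ with $p\mid|G|$ and $p\nmid|H|$.

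In that case I would compute the quiver with relations of $k\C$. Write $|G|=p^am'$ with $a\ge1$, $p\nmid m'$, and $|H|=n$; since $k$ is algebraically closed, $kG\cong(k[t]/(t^{p^a}))^{m'}$ and $kH\cong k^{\,n}$. Inspecting the radical series of the indecomposable projectives of the triangular algebra shows that $k\C$ is basic, with Ext‑quiver $Q$ having vertices $w_1,\dots,w_{m'}$ and $v_1,\dots,v_n$, a loop $\ell_i$ at each $w_i$, an arrow $\beta_{ij}\colon w_i\to v_j$ for all $i,j$, and no further arrows; a dimension count ($\dim_k k\C=|G|+n+|G|\,n$) then forces $k\C\cong kQ/\langle\ell_1^{p^a},\dots,\ell_{m'}^{p^a}\rangle$, and note $p^a\ge2$, $n\ge2$. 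To finish I would use covering theory in the sense of Bongartz and Gabriel: take the Galois $\mathbb{Z}$‑covering of $k\C$ attached to the grading with $\deg\ell_1=1$ and all other arrows in degree $0$. It replaces the loop $\ell_1$ by an infinite line $\cdots\to w_1^{(k)}\to w_1^{(k+1)}\to\cdots$, each $w_1^{(k)}$ still emitting $n$ arrows to pendant vertices $v_1^{(k)},\dots,v_n^{(k)}$, and it inherits no relation on any stretch of fewer than $p^a$ consecutive copies of $\ell_1$. The full subquiver on $\{w_1^{(0)},w_1^{(1)}\}\cup\{v_j^{(0)},v_j^{(1)}:1\le j\le n\}$ therefore carries no relation whatsoever, and its underlying graph — two adjacent vertices of degree $n+1\ge 3$, each with $n$ pendants — is not a Dynkin diagram (it is $\tilde D_5$ for $n=2$ and wild for $n\ge3$). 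Hence this subquiver's path algebra, and with it the covering algebra of $k\C$, is not locally representation‑finite, so $k\C$ is of infinite representation type. (Alternatively, one may push down along this covering a one‑parameter family of indecomposable $\tilde D_5$‑representations supported on that finite subquiver; since each has finite support and $\mathbb{Z}$ acts freely with infinite orbits, the push‑downs are indecomposable and pairwise non‑isomorphic.)

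\textbf{Main obstacle.} The reductions above are routine once the description of $\rep_k\C$ by tuples is available; the genuine difficulty is the last case, where exactly one of the two group algebras is non‑semisimple. There one must pin down the Ext‑quiver and the relations of $k\C$ correctly, and then locate inside a suitable covering a finite subquiver that is honestly hereditary (inheriting no zero‑relation) and non‑Dynkin. Isolating that relation‑free non‑Dynkin piece is the delicate point — it is precisely here that the hypotheses $n\ge2$ (so that the pendant count is at least two) and $p^a\ge2$ (so that short stretches of the unwound loop carry no relation) enter.
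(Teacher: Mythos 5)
Your argument is correct, and in substance it is a completed version of what the paper only sketches: the paper's own proof merely says to compute the Ext-quiver with relations in the three characteristic cases and to argue ``as in the examples'', giving no details, whereas you supply them, and you organize the cases differently in a way worth recording. You first dispose of the situation of several $(\End(x)\times\End(y))$-orbits on $\C(x,y)$ by restricting to representations on which both groups act trivially; this is exactly the embedding $\mod k\widehat{\C}\hookrightarrow \mod k\C$ from Section~2 with $\widehat{\C}$ an $r$-Kronecker quiver, $r\ge 2$, and it is a step the paper's sketch does not isolate even though the lemma assumes only freeness, not transitivity. For a single orbit you then quote the section's two earlier lemmas (cyclic implies abelian) for the semisimple case and for the case $p$ dividing both orders, instead of recomputing quivers, so the only genuinely new computation is the mixed case ($p$ dividing exactly one order, up to replacing $\C$ by $\C^{\op}$). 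There your presentation $kQ/\langle \ell_1^{p^a},\dots,\ell_{m'}^{p^a}\rangle$ is right: freeness makes $kS$ free as a right $kG$-module, so no relations $\beta_{ij}\ell_i^t=0$ with $t<p^a$ occur, and the count $\dim_k k\C=|G|+|H|+|G||H|$ closes the identification. The covering step mirrors the paper's own treatment of the characteristic-$2$ example in Section~3.2 (a cover containing a relation-free non-Dynkin subquiver): the zero relations are monomial and need either $p^a\ge 2$ consecutive arrows of the unwound line or a vertex $w_i^{(k)}$ with $i\ge 2$, so your chosen full subquiver on $w_1^{(0)},w_1^{(1)}$ and their $2n$ targets is hereditary of type containing $\widetilde{D}_5$, and either the Bongartz--Gabriel criterion or the push-down of a one-parameter family (legitimate since $\mathbb{Z}$ is torsion-free, hence acts freely on indecomposables) finishes the case. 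Two cosmetic remarks: the vertices $v_j^{(k)}$ are pendant in the cover only when $m'=1$ (for $m'\ge 2$ they also receive arrows from the other $w_i^{(k)}$), which does not affect your full-subquiver argument; and, as you note, $\C(x,y)\neq\emptyset$ must be (and implicitly is) assumed, since otherwise $k\C\cong k\End(x)\times k\End(y)$ may well be representation-finite.
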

\begin{proof}
In the case of cyclic endomorphism groups one can easily compute the Ext-quiver of $k\mathcal{C}$ and derive the relations on it such that $k\mathcal{C}$ is isomorphic to this bound path algebra. One has to distinguish between the cases where the orders of both groups are divided by the characteristic, only one of them or none. In all the three cases one ends up with infinite representation type. We are not going to present the details here, the computations are exactly the same as in the examples we discussed above.
\end{proof}
Up to now we have seen several special cases of EI-categories with two objects and two non-trivial endomorphism groups with free action which are representation-infinite. We are now in the position to prove the general statement
\begin{thm}\label{free_action}
Let $\mathcal{C}$ be a skeletal EI-category with two objects $x$ and $y$ such that the groups $\End(x)$ and $\End(y)$ are non-trivial and their product $\End(x) \times \End(y)$ acts freely on $\mathcal{C}(x,y)$. Then $k\mathcal{C}$ is of infinite representation type for any algebraically closed field $k$. 
\end{thm}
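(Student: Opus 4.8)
The plan is to deduce the statement from the preceding lemma, which settles the case of cyclic endomorphism groups, by passing from $\C$ to a very small sub-EI-category and arguing that this passage cannot destroy infinite representation type. Write $G:=\End(x)$ and $H:=\End(y)$; I assume throughout that $\C(x,y)\neq\emptyset$ (this is implicit in the hypothesis, since otherwise $k\C\cong kG\times kH$ and can well be representation-finite).

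First I would choose nontrivial \emph{cyclic} subgroups $G'\leq G$ and $H'\leq H$ — for instance of prime order, which exist because $G$ and $H$ are nontrivial. Fixing $f\in\C(x,y)$, let $O:=\{\,h\circ f\circ g\mid h\in H',\ g\in G'\,\}$ be its $(G'\times H')$-orbit, and let $\C'$ be the subcategory of $\C$ with the same two objects, $\End_{\C'}(x)=G'$, $\End_{\C'}(y)=H'$ and $\C'(x,y)=O$. Since $O$ is stable under postcomposition with $H'$ and precomposition with $G'$, composition in $\C$ restricts to $\C'$, so $\C'$ is a sub-EI-category, and $G'\times H'$ acts freely (and transitively) on $\C'(x,y)=O$ because it acts freely on $\C(x,y)$ by hypothesis. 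Hence the preceding lemma applies to $\C'$ and shows that $k\C'$ is of infinite representation type over every algebraically closed field.

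The heart of the argument — and the step I expect to be the main obstacle — is to show that the algebra inclusion $k\C'\hookrightarrow k\C$ is \emph{split}, i.e.\ that $k\C'$ is a direct summand of $k\C$ as a $(k\C',k\C')$-bimodule. The natural candidate for a complement is
\[
W\;:=\;\operatorname{span}_k\big((G\setminus G')\ \sqcup\ (\C(x,y)\setminus O)\ \sqcup\ (H\setminus H')\big),
\]
and one must check that $W$ is closed under left and right multiplication by $k\C'$. On the group parts this reduces to the observation that $kG'$ is a bimodule summand of $kG$ — because $G'$ is a union of $(G',G')$-double cosets in $G$ — and likewise for $H'\leq H$; on $\C(x,y)$ the stability of $k(\C(x,y)\setminus O)$ under left $H'$- and right $G'$-multiplication is immediate since $O$ is a single $(G'\times H')$-orbit. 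The genuinely delicate checks are those involving multiplication by elements of $O$: one needs, for example, that $\gamma\circ\alpha\notin O$ whenever $\gamma\in O$ and $\alpha\in G\setminus G'$, together with its symmetric counterparts, and this is precisely where the freeness of the $G\times H$-action on $\C(x,y)$ is used. (All mixed products such as $G\cdot\C(x,y)$, $\C(x,y)\cdot H$ or $\C(x,y)\cdot\C(x,y)$ vanish in $k\C$, so they cause no difficulty.)

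Granting that $k\C'\hookrightarrow k\C$ is split, the conclusion is formal. From $k\C=k\C'\oplus W$ as $k\C'$-bimodules one gets, for every finite-dimensional $k\C'$-module $M$, that $M$ is a direct summand of the restriction to $k\C'$ of the induced $k\C$-module $k\C\otimes_{k\C'}M$. If $k\C$ were of finite representation type, then decomposing each $k\C\otimes_{k\C'}M$ into the finitely many indecomposable $k\C$-modules, and using that the restriction to $k\C'$ of each of these has only finitely many indecomposable summands, would bound the number of isomorphism classes of indecomposable $k\C'$-modules — contradicting the preceding lemma. Therefore $k\C$ is of infinite representation type.
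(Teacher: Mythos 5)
Your proof is correct, but it takes a genuinely different route from the paper's. The paper argues by explicit construction: after the cases settled by the preceding lemmas (both group algebras non-semisimple, both groups cyclic, both abelian with semisimple group algebras), it distinguishes the remaining situations according to semisimplicity of $kG$, $kH$ and (non)abelianness of $G$, $H$, and in each case exhibits a one-parameter family $(V_\lambda)_{\lambda\in k^{\star}}$ of pairwise non-isomorphic indecomposables built from suitable modules $M$, $N$ joined by the matrices $A_\lambda$. You instead reduce uniformly to the cyclic-groups lemma: choose prime-order subgroups $G'\leq G$ and $H'\leq H$, one $(G'\times H')$-orbit $O$ of a fixed $f\in\C(x,y)$, and transfer infinite representation type from $k\C'$ up to $k\C$ via the bimodule splitting $k\C=k\C'\oplus W$ and the standard induction--restriction/Krull--Schmidt argument (the same one showing that a subgroup of a group with representation-finite group algebra again has representation-finite group algebra). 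The step you flag as delicate does go through: if $\omega\in O$, $\alpha\in G\setminus G'$ and $\omega\circ\alpha\in O$, then writing $\omega=h_1\circ f\circ g_1$ and $\omega\circ\alpha=h_2\circ f\circ g_2$ with $g_i\in G'$, $h_i\in H'$ gives $(h_2^{-1}h_1)\circ f\circ(g_1\alpha g_2^{-1})=f$, and freeness of the action at $f$ forces $\alpha=g_1^{-1}g_2\in G'$, a contradiction; the check for $(H\setminus H')\circ O$ is symmetric, and all other products vanish or stay in $W$ for exactly the reasons you give (double cosets for the group parts, orbit-stability for $\C(x,y)\setminus O$, non-composability for the mixed ones). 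Comparing the two approaches: the paper's is case-heavy but self-contained for those cases and yields explicit one-parameter families of indecomposables, while yours is shorter and uniform, and the split-subalgebra principle has independent value (it shows that any EI-category containing such a configuration as a not necessarily full subcategory is representation-infinite); on the other hand it rests entirely on the cyclic lemma, whose proof the paper itself only sketches, and it produces no explicit modules. Finally, both arguments tacitly assume $\C(x,y)\neq\emptyset$ (otherwise the statement as printed fails, e.g.\ for $\End(x)\cong\End(y)\cong\mathbb{Z}_2$ in characteristic zero); you make this hypothesis explicit, which is appropriate.
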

\begin{proof}
The claim has already been proven for the case where both group algebras are non-semisimple, for the case where both groups are cyclic and for the case of two semisimple abelian group algebras. To prove the theorem we still have to distinguish between different cases. For simplicity denote $G:= \End(X)$ and $H:=\End(Y)$.
\begin{itemize} 
\item[(a)] Suppose that $kG$ and $kH$ are semisimple. We are going to construct an infinite family $(V_{\lambda})_{\lambda \in k^{\star}}$ of pairwise non-isomorphic indecomposable representations of $\mathcal{C}$. Let $\lambda \in k^{\star}$ and define $V_\lambda(X) = M$, $V_\lambda(Y) = N$ where $M$ is a $kG$-module, $N$ a $kH$-module, both having dimension at least $2$. Furthermore we have to choose one linear map $M \to N$ which we want, for some fixed basis, to be given by the matrix $$A_{\lambda}:= \left(\begin{array}{ccccc} 1 & 0 & 0 &  0 &\dots \\ \lambda & 1 & 0 & 0 & \dots  \\ 0 & 0 & 0 & 0 & \cdots \\ \vdots & \vdots & \vdots & \vdots & \cdots \end{array}\right),$$ where all the dots stand for zeros. For the choice of $M$ and $N$ we again have to distinguish different cases.
\begin{enumerate}
\item[(i)] Suppose that neither $G$ nor $H$ is abelian. Then we can choose $M$ to be a simple $kG$-module and $N$ to be a simple $kH$-module, both of dimension $\geq 2$. Then, since $M$ and $N$ are indecomposable, it is clear that every $V_{\lambda}$ is indecomposable. We will now show that for $\lambda \neq \mu$ we have $V_{\lambda}\not\cong V_{\mu}$. To see that, suppose that $(\phi,\psi): V_{\lambda}\to V_{\mu}$ is an isomorphism of $k\C$-modules. This implies that $\phi \in \End_{kG}(M) \cong k$ and $\psi \in \End_{kH}(N) \cong k$, which means $\phi = \alpha \cdot 1_{M}$ and $\psi = \beta \cdot 1_N$. In addition, $\phi$ and $\psi$ have to be compatible with the action of the matrix $A_{\lambda}$ (defined above). This gives the equations $\alpha = \beta$ and $\alpha\lambda = \beta\mu$ and hence $\alpha = \beta = 0$ which contradicts the assumption that $(\phi,\psi)$ is an isomorphism.
\item[(ii)] Suppose that $H$ is non-abelian and $G$ is abelian (the other way around is dual). Choose $N$ as above  and put $M = k^2$ with $G$-action given by the matrix  $ \left(\begin{smallmatrix} a & 0 \\ 0 & b \end{smallmatrix}\right)$ where $a \neq b$ and both are non-zero. In other words we want $M$ to be the direct sum of two non-isomorphic one-dimensional simple $kG$-modules. In this case we have that $\End_{kG}(M) = \left(\begin{smallmatrix} k & 0 \\ 0 & k \end{smallmatrix} \right)$ and we deduce that $V_\lambda$ is indecomposable. As above we see that $V_\lambda \not\cong V_\mu$ for $\lambda \neq \mu$.
\end{enumerate}
\item[(b)] The last case that has to be treated (again by subdivision into different cases) is the case where one of the group algebras $kG$ and $kH$ is semisimple while the other is not and not both of them are abelian. We will deal with the case where $kG$ is not semisimple and $kH$ is semisimple, the other case can be proven analogously.

We may assume, that the Sylow $p$-subgroup $D$ of $G$ is cyclic ($p = \ch(k)$), since otherwise $kG$ and hence $k\C$ is of infinite representation type and we have nothing to prove. By standard results from representation theory of finite groups we can then choose an indecomposable $kG$-module $M$ such that its restriction $M\downarrow_{D}$ has a $p$-dimensional direct summand on which $D$ acts by the matrix $$S = \left(\begin{array}{cccccc} 1 & 1 &  &  &  &   \\ & 1 & . & &  &   \\  &  &  . & . &  &  \\  &  &  & . & .  &  \\ & & &  &  . & 1 \\  &  &  &  &  & 1 \end{array}\right).$$ If now $H$ is not abelian we choose, as above, a simple $kH$-module $N$ with $\dim N \geq 2$ and for $\lambda \in k^{\star}$ we denote by $A_{\lambda}$ the same matrix as above. Then $ \begin{xy}\xymatrix{ M \ar[r]^{A_{\lambda}} & N }\end{xy}$ is an indecomposable representation of $\C$. We should now show that $V_{\lambda}\not\cong V_{\mu}$ for $\lambda \neq \mu$. Suppose that $((b_{i,j}),(c_{i,j}))$ is an isomorhpism of representations $V_{\lambda}\to V_{\mu}$. Then the matrix $(b_{i,j})$ has to commute with the $G$-action on $M$, in particular with a matrix of the shape $\left(\begin{smallmatrix} S & 0 \\ 0 & \star \end{smallmatrix}\right)$, where $\star$ is any matrix. This gives the conditions $b_{2,1} = 0 $ and $b_{1,1} = b_{2,2}$. An endomorphism of $N$ as a $kH$-module is just a scalar multiple of the identity, i.e. $(c_{i,j}) = c \cdot 1_N$. Finally, the following diagram has to commute.
$$\begin{xy}\xymatrix{ M \ar[d]_{(b_{i,j})} \ar[r]^{A_{\lambda}} & N \ar[d]^{c} \\ M \ar[r]^{A_{\mu}} & N }\end{xy}$$
This yields the conditions $b_{1,2} = 0$, $c = b_{1,1} = b_{2,2}$ and $\lambda\cdot c = \mu \cdot c$, which give that $c = 0$ and hence $V_{\lambda} \not\cong V_{\mu}$. If the group $H$ is abelian we replace the module $N$ from above by a $2$-dimensional $kH$-module which is the direct sum of two non-isomorphic one-dimensional simple $kH$-modules and get the claim by the same computations as we have just done.

To finish the proof we should consider the case where $kG$ is semisimple and $kH$ is not and not both are abelian. In this case the argument is the same as in the case we have treated above, only the computations are a little bit different. \qedhere
\end{itemize}
\end{proof}


\section{EI-category algebras with two simple modules}
As we have seen, the classification of EI-categories of finite representation type gets very complicated, even with the assumption that the category has only two objects. The distinguishing mark for finite or infinite representation type seems to be the nature of the group actions of the automorphism groups on the morphism sets between distinct objects. In this section we will give a classification of all representation-finite EI-category algebras with only two simple modules. This work is motivated by work of Bongartz and Gabriel \cite{Bongartz-Gabriel} who classified all representation-finite $k$-categories with two simples and radical of codimension $2$. We will compute the 
Ext-quiver of a given EI-category algebra with two objects and then use the list of Bongartz and Gabriel. For details on this list we refer to \cite[page 242]{Bautistaetal}.

Until the end of the article $k$ denotes an algebraically closed field of positive characteristic $p$.

Since we want to decide how many simples an EI-category algebra $k\C$ has and the simples of $k\C$ are given by the simple modules over the group algebras $k\Aut(x)$ for $x\in\Ob\C$, the following well-known result from representation theory of finite groups is useful.
\begin{lem}[see for example \cite{Alperin}]
 Let $G$ be a finite group. Then the number of simple $kG$-modules equals the number of conjugacy classes of elements in $G$ whose order is not divisible by $p$.
\end{lem}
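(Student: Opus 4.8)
The plan is to translate the statement into a dimension computation inside the finite-dimensional $k$-algebra $A := kG$. Since $k$ is algebraically closed, $A/J(A) \cong \prod_i M_{n_i}(k)$, and for each block the commutator subspace $[M_n(k),M_n(k)]$ is exactly the space of trace-zero matrices, so $M_n(k)/[M_n(k),M_n(k)]$ is one-dimensional. Transporting this through the surjection $A \twoheadrightarrow A/J(A)$ shows that the number of simple $A$-modules equals $\dim_k A/([A,A]+J(A))$, where $[A,A]$ is the $k$-span of the commutators $ab-ba$. Thus it is enough to compute this codimension for $A=kG$.

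Next I would introduce the subset $T_p(A) := \{\, a\in A : a^{p^m}\in [A,A] \text{ for some } m\ge 1 \,\}$ and prove $T_p(A) = [A,A]+J(A)$. The key tool is a ``freshman's dream modulo commutators'': expanding $(a_1+\dots+a_r)^p$ as a sum over length-$p$ words and grouping words into orbits under cyclic rotation, each non-constant orbit has $p$ elements which are pairwise conjugate in $A$ (a cyclic rotation of a product differs from it by a commutator), hence contributes $0$ modulo $[A,A]$; this gives $(a+b)^p\equiv a^p+b^p \pmod{[A,A]}$ and, applied inside $[A,A]$, shows $[A,A]$ is stable under $x\mapsto x^p$. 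Therefore $T_p(A)$ is a $k$-subspace, it visibly contains $[A,A]$, and it contains $J(A)$ since every element of $J(A)$ is nilpotent. For the opposite inclusion I pass to $A/J(A)$ and use that for a matrix $M$ over $k$ one has $\operatorname{tr}(M^{p^m}) = \operatorname{tr}(M)^{p^m}$ (Frobenius on the eigenvalues), so $M^{p^m}$ has trace $0$ iff $M$ does; hence $a^{p^m}\in[A,A]$ forces the image of $a$ in every matrix block of $A/J(A)$ to be a commutator, i.e. $a\in [A,A]+J(A)$. Consequently the number of simple $kG$-modules equals $\dim_k kG/T_p(kG)$.

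Finally I would compute $\dim_k kG/T_p(kG)$ directly. Since $[kG,kG]$ is spanned by the differences $g-h$ of conjugate group elements, $kG/[kG,kG]$ has as basis the conjugacy classes $\bar C_1,\dots,\bar C_h$, and by the freshman's dream above $a\mapsto \overline{a^p}$ descends to an additive (Frobenius-semilinear) operator $F$ on $kG/[kG,kG]$ acting on this basis by $F(\bar C_i)=\bar C_{\sigma(i)}$, where $\sigma(i)$ is the conjugacy class of $g_i^{\,p}$. Then $T_p(kG)/[kG,kG]=\bigcup_m\ker F^m$, so the Fitting decomposition of $F$ gives $\dim_k kG/T_p(kG)=\dim_k\bigcap_m\operatorname{im}F^m$, which in the class basis is the number of periodic points of the self-map $\sigma$ of $\{1,\dots,h\}$. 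A class with representative $g$ is periodic for $\sigma$ exactly when $g^{p^m}$ is conjugate to $g$ for some $m\ge 1$: if $p\nmid|g|$ this holds because $p^m\equiv 1\pmod{|g|}$ for suitable $m$, whence $g^{p^m}=g$; if $p\mid|g|$ then $|g^{p^m}|<|g|$ for every $m\ge 1$, so $g^{p^m}$ is never conjugate to $g$. Hence the periodic points of $\sigma$ are precisely the $p$-regular classes, and $\dim_k kG/T_p(kG)$ equals the number of conjugacy classes of elements whose order is prime to $p$, which is the assertion.

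I expect the crux to be the identity $T_p(kG)=[kG,kG]+J(kG)$: one inclusion rests on the combinatorial cyclic-rotation argument that tames the $p$-th power map modulo commutators, the other on the Frobenius-on-traces computation in the split semisimple quotient $kG/J(kG)$. Once that is in hand, the passage to the $p$-regular conjugacy classes is essentially bookkeeping with the map $g\mapsto g^p$ on conjugacy classes.
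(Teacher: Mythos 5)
Your proof is correct, but note that the paper itself offers no proof of this lemma at all --- it is Brauer's classical theorem on $p$-regular classes and is simply quoted from Alperin's book --- so the comparison is between your argument and the standard textbook one rather than anything in the paper. What you give is essentially the classical Brauer/Curtis--Reiner/Serre argument: the number of simples equals $\dim_k A/([A,A]+J(A))$ because each matrix block of $A/J(A)$ contributes one dimension via the trace, and this subspace coincides with $T_p(A)=\{a: a^{p^m}\in[A,A]\}$ by the cyclic-rotation congruence $(a+b)^p\equiv a^p+b^p \pmod{[A,A]}$ in one direction and the identity $\operatorname{tr}(M^{p^m})=\operatorname{tr}(M)^{p^m}$ in the other. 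Where you genuinely deviate from the usual finish is the last step: the textbook route decomposes $g=g_pg_{p'}$ into commuting $p$- and $p'$-parts and checks directly that the $p$-regular class representatives form a basis of $kG/T_p(kG)$, whereas you instead take the Fitting decomposition of the $p$-semilinear operator $F$ induced by $a\mapsto a^p$ on $kG/[kG,kG]$ and count periodic points of the map $g\mapsto g^p$ on conjugacy classes; this works because $k$ is perfect (so $\operatorname{im}F^m$ is the subspace spanned by the class vectors indexed by $\sigma^m(\{1,\dots,h\})$ and rank--nullity applies to the linearization on the Frobenius twist), and it buys you a slightly cleaner bookkeeping at the cost of invoking semilinear Fitting theory. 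Two micro-steps you leave implicit but which are routine: stability of $[A,A]$ under $x\mapsto x^p$ also uses $(ab)^p-(ba)^p=[a,(ba)^{p-1}b]\in[A,A]$, not just the additive freshman's dream; and the splitting $V=\bigcup_m\ker F^m\oplus\bigcap_m\operatorname{im}F^m$ for semilinear $F$ should be justified exactly as you sketch, via stabilization of kernels and images. With these remarks your argument is complete and correct under the paper's standing assumption that $k$ is algebraically closed of characteristic $p>0$.
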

The elements of a finite group $G$ of order not divisible by some prime $p$ are called \emph{$p$-regular}, the remaining ones are called \emph{$p$-singular}. Every element of $G$ can be written as a product of a $p$-singular and a $p$-regular element. Using this fact, we observe the following easy but important statement.
\begin{cor}
Let $G$ be a finite group such that the group algebra $kG$ has only one simple module. Then $G$ is either the trivial group or a $p$-group for $p = \ch(k)$.
\end{cor}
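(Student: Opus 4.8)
The plan is to read the conclusion straight off the preceding Lemma together with the $p$-singular/$p$-regular decomposition recalled just above it. So suppose that $kG$ has a unique simple module. By the Lemma, $G$ then has exactly one conjugacy class of $p$-regular elements. Now the identity $1 \in G$ is $p$-regular, since its order $1$ is not divisible by $p$, and its conjugacy class is the singleton $\Set{1}$. Hence $\Set{1}$ must be the unique conjugacy class of $p$-regular elements, and therefore the only $p$-regular element of $G$ is $1$ itself.

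Next I would use the decomposition $g = su = us$ of an arbitrary element $g \in G$ into its $p$-singular part $s$, whose order is a power of $p$, and its $p$-regular part $u$, both of which are powers of $g$. Since $u$ is $p$-regular, the previous step forces $u = 1$, and so $g = s$ has order a power of $p$. Thus every element of $G$ has $p$-power order.

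Finally I would invoke Cauchy's theorem: if a prime $q$ divides $|G|$, then $G$ contains an element of order $q$, which by the previous paragraph forces $q = p$. Hence $|G|$ is a power of $p$, so either $G$ is trivial or $G$ is a non-trivial $p$-group, which is exactly the claim.

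The argument is essentially immediate once the Lemma is granted, so there is no real obstacle; the only point worth a moment's care is that the $p$-regular part $u$ of $g$ is a \emph{power} of $g$, so that "$u$ is $p$-regular" genuinely forces $u = 1$ rather than merely $u$ conjugate to $1$ — although here the weaker conclusion would already suffice, since the conjugacy class of $1$ is a singleton.
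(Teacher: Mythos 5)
Your proof is correct and follows essentially the same route as the paper: apply the Lemma to conclude that $1$ is the only $p$-regular element, then use the decomposition of an arbitrary element into its $p$-regular and $p$-singular parts to force every element to have $p$-power order. You are slightly more explicit than the paper (noting that the $p$-regular part is a power of $g$ and invoking Cauchy's theorem to pass from element orders to $|G|$ being a $p$-power), which only tightens the same argument.
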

\begin{proof}
From the lemma we know that $G$ has only one $p$-regular element, namely the unit $1_G$. If $p$ does not divide the order of $G$, then if follows that $G = \Set{1_G}$. Suppose that $p\mid |G|$ and let $x \in G$ be any element. Then we write $x = z y$ as a product of a $p$-regular element $z$ and a $p$-singular element $y$. By assumption we have $z = 1_G$ and therefore $x$ is $p$-singular. Hence, every element has order divisible by $p$ and $G$ is a $p$-group.
\end{proof}
Now we can give a full classification of all representation-finite EI-categories with two simples. The following list is particularly interesting in characteristic $2$ and $3$.
\begin{thm}
Let $\C$ be a skeletal EI-category and $k$ an algebraically closed field such that $k\C$ has two simple modules. Then $k\C$ is of finite representation type if and only if it satisfies one of the following conditions.
\begin{itemize}
\item[$(1)$] $\C$ has one object $x$, the group $\Aut(x) = \Mor\C$ has two conjugacy classes of $p$-regular elements and the Sylow $p$-subgroup of $G$ is cyclic.
\item[$(2)$] $\C$ has two non-isomorphic objects $x$ and $y$, $\Aut(x)$ and $\Aut(y)$ are cyclic $p$-groups or trivial groups, the action of $\Aut(x)\times\Aut(y)$ on $\C(x,y)$ has at most one orbit and $\C$ is a quotient category or its dual is a quotient category of an EI-category satisfying one of the following properties.
\begin{enumerate}
\item[(a)] $|\C(x,y)|\leq 1$.
\item[(b)] $|\Aut(x)| \cdot |\Aut(y)| \leq 3$.
\item[(c)] $\ch k =2$, $|\Aut(x)| = 2$, $|\Aut(y)| = 2^s$, $s\geq 0$ and $|\C(x,y)| = 2$ with $\Aut(x)$ acting freely and $\Aut(y)$ acting trivially or the generator of $\Aut(y)$ permuting the two elements of $\C(x,y)$.
\item[(d)] $\ch k = 3$, $|\Aut(x)| = 3 =|\Aut(y)|$, $|\C(x,y)| = 3$ and both $\Aut(x)$ and $\Aut(y)$ permute the three morphisms in $\C(x,y)$. 
\end{enumerate}
\end{itemize}
\end{thm}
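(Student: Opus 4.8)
The plan is to reduce everything to the case of an EI-category with at most two objects and then feed the relevant bound quiver algebras into the Bongartz--Gabriel list. First I would establish the ``if and only if'' by splitting along the number of objects. By the Corollary preceding the theorem, if $k\C$ has two simples then either $\C$ is skeletal with one object $x$ and $\Aut(x)=G$ has exactly two conjugacy classes of $p$-regular elements, or $\C$ has two non-isomorphic objects $x,y$ each of whose automorphism groups has exactly one $p$-regular class, hence (again by the Corollary) is trivial or a $p$-group. The one-object case is precisely the classical question of when a group algebra $kG$ with two blocks of simple modules is representation-finite; the answer, that this happens exactly when the Sylow $p$-subgroup is cyclic, is Higman's theorem, and this gives condition $(1)$ immediately.

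Next I would treat the two-object case. By Remark~2.7(2) (the fully faithful embedding $\mod k\widehat\C \hookrightarrow \mod k\C$) together with Theorem~\ref{endotrivial}, representation-finiteness of $k\C$ forces $\widehat\C = A_2$; combined with the fact that full subcategories must be representation-finite, this forces $|\C(x,y)|\geq 1$ and both group algebras $k\Aut(x)$, $k\Aut(y)$ of finite type. Since each of these groups is trivial or a $p$-group with a single $p$-regular class, finite representation type of the group algebra forces the $p$-group to be cyclic (Higman again). Theorem~\ref{free_action} then rules out free actions of $\Aut(x)\times\Aut(y)$ on $\C(x,y)$, and a short argument using the same subcategory principle rules out two or more orbits (two orbits already give a subcategory whose algebra is a tame-or-wild concealed algebra — this needs to be checked by a small Ext-quiver computation, showing the underlying graph is not Dynkin). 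So we are reduced to: $\Aut(x),\Aut(y)$ cyclic $p$-groups or trivial, and at most one orbit.

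The heart of the proof is then the explicit determination, for each such $\C$, of the Ext-quiver of $k\C$ and the relations making $k\C$ isomorphic to the corresponding bound path algebra, followed by comparison with the Bongartz--Gabriel classification of representation-finite $k$-categories with two simples. Here I would use: (i) primitive idempotents in $k\Aut(x)$ and $k\Aut(y)$ obtained from characters (these are explicit since the groups are cyclic), (ii) a computation of $\rad k\C/\rad^2 k\C$ as an $\Aut(x)\times\Aut(y)$-representation on $k\C(x,y)$, decomposed into isotypic components, and (iii) the covering-theory arguments of Bongartz--Gabriel to knit Auslander--Reiten quivers when direct comparison with the list is not transparent. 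Running through the possibilities for the orders and the orbit structure produces exactly the four families (a)--(d): $|\C(x,y)|\leq 1$ gives an incidence-algebra-like situation, $|\Aut(x)|\cdot|\Aut(y)|\leq 3$ leaves only very small quivers, and the two ``sporadic'' cases in characteristic $2$ and $3$ are exactly the borderline configurations where the bound quiver sits just inside the Bongartz--Gabriel list. Finally I would note that passing from $\C$ to a quotient category only shrinks the module category (via the embedding of Remark~2.7(2)), so replacing $\C$ by ``$\C$ or its dual is a quotient of'' one of (a)--(d) is harmless for finiteness and is needed because, e.g., identifying parallel morphisms can produce further representation-finite algebras from (c) and (d).

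The main obstacle will be step (iii): verifying that the bound quiver algebras arising in cases (c) and (d) really are of finite representation type (and that nearby larger configurations are not). This is where one genuinely needs the Bongartz--Gabriel list together with covering theory — the algebras in question are not string algebras without bands, so the easy arguments used in Section~3 do not suffice, and one must either locate them precisely on the published list or knit the full AR-quiver of the universal cover. A secondary subtlety is making sure the ``at most one orbit'' and ``quotient category'' hypotheses are exactly the right closure conditions: one must check both that every algebra on the list is realised by some such $\C$ and that no $\C$ outside these conditions sneaks in, which requires a careful bookkeeping of how the group actions on $\C(x,y)$ translate into arrows and relations on the Ext-quiver.
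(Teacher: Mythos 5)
Your proposal is correct and follows essentially the same route as the paper: bound the number of objects by counting simples, use the corollary to force trivial or cyclic $p$-group automorphism groups, compute the Ext-quiver of $k\C$ with its relations, and compare with the Bongartz--Gabriel list (with covering theory for the borderline cases in characteristics $2$ and $3$). The only difference is cosmetic: you make the reductions via Theorem~\ref{free_action} and the orbit count explicit, whereas the paper absorbs these constraints directly into the parametrised families of bound quiver algebras it checks against the list.
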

\begin{proof}
$\C$ has at most two objects since every object gives at least one simple $k\C$-module. If $\C$ has only one object, then $k\C$ is a group algebra and, if it is representation-finite with only two simples, it has to satisfy condition (1).

Suppose that $\C$ has two objects $x$ and $y$. The assumption that $k\C$ has two simples implies that both $k\Aut(x)$ and $k\Aut(y)$ have one simple module. Hence, they are either trivial or $p$-groups. In case of two trivial automorphism groups, $\C$ is the path category of the Dynkin quiver $A_2$ which is representation-finite. If one of $\Aut(x)$ and $\Aut(y)$ is a $p$-group it has to be representation-finite which means that it is a cyclic $p$-group. Assume that $\Aut(x)$ is a cyclic $p$-group and $\Aut(y)$ is trivial. Then the computation of the Ext-quiver of $k\C$ yields, that $k\C$ is isomorphic to the following path algebra with relations or its dual:
$$\begin{xy}\xymatrix{\circ\ar@(dl,ul)^{\alpha}\ar[r]^{\beta} & \circ }\end{xy}, \;\;\;\;\; \alpha^m = 0 = \beta\alpha^n, \; m = p^r, \; n \mid m. $$
According to the Bongartz-Gabriel list, this algebra is representation-finite only for the following values of $m$ and $n$
\begin{itemize}
\item $m = 2$, $n=1,2$;
\item $m = 3$, $n=1,3$;
\item $m = 4$, $n=1,2$;
\item $m \geq 5$, $n = 1$.
\end{itemize}
Any of these cases fulfils one of the conditions from (2).\\
Analogously, we assume that both $\Aut(x)$ and $\Aut(y)$ are cyclic $p$-groups. In this case $k\C$ is isomorphic to one of the following path algebras with relations or its dual:
\begin{itemize}
\item[(i)]
$$\begin{xy}\xymatrix{\circ\ar@(dl,ul)^{\alpha}\ar[r]^{\beta} & \circ \ar@(dr,ur)_{\gamma} }\end{xy}, \;\;\;\;\; \gamma^t = \alpha^m = 0 = \gamma^s\beta = \beta\alpha^n, \; m = p^r, \; t = p^l, \; n \mid m, \; s\mid t. $$
\item[(ii)]
The algebra from (i) with the additional relation $ \gamma^{e}\beta = \beta\alpha^f$ where $e \mid t$ and $f \mid m$.
\end{itemize}
Again we consult the list of Bongartz-Gabriel and find that (up to duality) only the following values for $m,n,s$ and $t$ give a representation-finite algebra:

For the algebra from (i):
\begin{itemize}
\item any combination of $m\in \lbrace 1,2 \rbrace$, $n \in \lbrace 1,2 \rbrace$, $t = 2^l, l\geq 0$ and $s \in \lbrace 1,2 \rbrace$;
\item $m,s \geq 3$ and $n = 1 = t$.
\end{itemize}
For case (ii) the combination $ t = m = s = n = 3$ and $n = 1 = t$ is the only case with finite representation type that is not contained in the ones mentioned above.

Again this fits into our assertion and no other cases can occur, which finishes the proof.
\end{proof}
\begin{Rem}
For the representation-finite EI-categories with two simples one can compute the Auslander-Reiten quiver, since they are either hereditary of Dynkin type or occur in the list of Bongartz-Gabriel. For the latter case one again uses covering-theory to knit the Auslander-Reiten quiver of the covering and then pushes everything down to the algebra itself.
\end{Rem}

\bibliographystyle{plain}
\bibliography{refs2}
\end{document}